\newtheorem{thm}{Theorem}
\newtheorem{lemma}[thm]{Lemma}
\newtheorem{cor}[thm]{Corollary}
\newcommand{\reals}{\mathbb{R}}
\newcommand{\naturals}{\mathbb{N}}
\newcommand{\integers}{\mathbb{Z}}
\newcommand{\complex}{\mathbb{C}}
\newcommand{\eps}{\varepsilon}
\newcommand{\supp}{\text{supp}}
\newcommand{\Ffamily}{\mathcal{F}}
\begin{document}
\title{Natural proof of the characterization of relatively compact families in $L^p-$spaces on locally compact groups}
\author{Mateusz Krukowski}
\affil{Institute of Mathematics, \L\'od\'z University of Technology, \\ W\'ol\-cza\'n\-ska 215, \
90-924 \ \L\'od\'z, \ Poland \\ \vspace{0.3cm} e-mail: mateusz.krukowski@p.lodz.pl}
\maketitle

\begin{abstract}
In the paper we look for an elegant proof of the characterization of relatively compact families in $L^p-$spaces. At first glance, the suggested approach may seem convoluted and lengthy, but we spare no effort to argue that our proof is in fact more natural than the ones existent in the literature. The key idea is that the three properties which characterize relative compactness in $L^p-$spaces ($L^p-$boundedness, $L^p-$equicontinuity and $L^p-$equivanishing) are ``preserved'' (or rather ``inherited'') when the family $\Ffamily\subset L^p$ is convolved with a continuous and compactly supported function. This new family turns out to be relatively compact in $C_0-$space and it remains to be demonstrated that relative compactness in $C_0-$space implies relative compactness of the original family $\Ffamily.$ 
\end{abstract}

\smallskip
\noindent 
\textbf{Keywords : } Arzel\`a-Ascoli theorem, Kolmogorov-Riesz theorem, Weil theorem, Sudakov theorem, Young's convolution inequality\\
\vspace{0.2cm}
\\
\textbf{AMS Mathematics Subject Classification (2010) : } 43A15, 46B50, 46E15, 46E30

\section{Introduction}

Let us begin with a brief historical overview\footnote{See \cite{HancheOlsenHolden} for a much more detailed account.}: Andrey Kolmogorov ($1903-1987$) was arguably the first person, who succeeded in characterizing relatively compact families in $L^p(\reals^N),$ when $1 < p < \infty$ and all functions are supported in a common bounded set\footnote{See \cite{Kolmogorov} for the original paper by Kolmogorov.}. A year later (in $1932$) Jacob David Tamarkin ($1888-1945$) got rid of the second restriction\footnote{See \cite{Tamarkin} for the original paper by Tamarkin.} and in $1933$ Marcel Riesz ($1886-1969$), a younger brother of Frigyes Riesz ($1880-1956$), proved the general case\footnote{See \cite{Riesz} for the original paper by Riesz.}:

\begin{thm}
Let $1\leqslant p < \infty.$ A family $\Ffamily \subset L^p\left(\reals^N\right)$ is relatively compact if and only if
\begin{itemize}
	\item $\Ffamily$ is $L^p-$bounded, i.e. there exists $M>0$ such that 
	$$\forall_{f\in\Ffamily}\ \|f\|_p \leqslant M,$$
	\item $\Ffamily$ is $L^p-$equicontinuous, i.e. for every $\eps>0$ there exists $\delta>0$ such that 
	$$\forall_{\substack{|y|<\delta \\ f\in\Ffamily}}\ \int_{\reals^N}\ |f(x+y) - f(x)|^p\ dx \leqslant \eps,$$
	\item $\Ffamily$ is $L^p-$equivanishing, i.e. for every $\eps>0$ there exists $R>0$ such that 
	$$\forall_{f\in\Ffamily}\ \int_{|x| > R}\ |f(x)|^p \ dx \leqslant \eps.$$
\end{itemize}
\label{KolmogorovRiesz}
\end{thm}

In $1940$, Andr\'e Weil ($1906-1998$) published a book ``L'int\'{e}gration dans les groupes topologique'', in which he demonstrated that Theorem \ref{KolmogorovRiesz} holds true even if we replace $\reals^N$ with a locally compact Hausdorff group $G$\footnote{See \cite{Weil}, p. 53-54 for Weil's argument (in french).}:

\begin{thm}
Let $G$ be a locally compact Hausdorff group. A family $\Ffamily\subset L^p(G)$ is relatively compact if and only if
\begin{itemize}
	\item $\Ffamily$ is $L^p-$bounded,
	\item $\Ffamily$ is $L^p-$equicontinuous, i.e. for every $\eps>0$ there exists an open neighbourhood $U_e$ of the neutral element $e$ such that 
	$$\forall_{\substack{y\in U_e \\ f\in\Ffamily}}\ \int_G\ |f(xy) - f(x)|^p\ dx \leqslant \eps,$$
	\item $\Ffamily$ is $L^p-$equivanishing, i.e. for every $\eps>0$ there exists $K\Subset G$ (which means that $K$ is a compact subset of $G$) such that 
	$$\forall_{f\in\Ffamily}\ \int_{G\backslash K}\ |f(x)|^p \ dx \leqslant \eps.$$
\end{itemize}
\end{thm}

Weil's book is oftentimes cited as a reference source when it comes to characterizing relatively compact families in $L^p-$spaces\footnote{See \cite{Feichtinger, GorkaKostrzewa, GorkaMacios, GorkaRafeiro, HancheOlsenHolden}.}. However, the argument of the french mathematician is (at least in the author's opinion) rather difficult to follow $-$ the exposition is very terse, avoids technical details and leaves much of the work to the reader. The fact that the book is written in French does not make matters easier. Our goal is to overcome these inconveniences. 

As far as the organization of the paper is concerned, Chapter \ref{sectioncompactfamiliesC0} is devoted to the Arzel\`a-Ascoli theorem. As we document, the classic version of this theorem (for $C(X),\ X$ being a compact space) is very well-established in the literature, but the version for $C_0(X),\ X$ being a locally compact space, is almost non-existent. In our approach we employ the concept of one-point compactification $X_{\infty}$ of $X$. Following a brief description of this construct we lay out a full proof of the Arzel\`a-Ascoli theorem for $C_0(X).$

Chapter \ref{sectioncompactfamiliesLp} commences with a short summary of the Haar measure. Subsequently, we introduce three $L^p-$properties: $L^p-$boundedness, $L^p-$equicontinuity and $L^p-$equivanishing. Next, Theorems \ref{Lpboundedness}, \ref{Lpequicont} and \ref{Lpequivanish} demonstrate how these $L^p-$properties are ``passed down'' from $\Ffamily\subset L^p(G), G$ being a locally compact group, to $\Ffamily\star \phi\subset C_0(G),$ where $\phi\in C_c(G).$

Chapter \ref{sectionYoung} is divided into two parts: the first one pivots around proving that $\Ffamily\star\phi$ is ``not far away'' (if we choose $\phi\in C_c(G)$ properly) from $\Ffamily$ (see Theorem \ref{choiceofphi}). The subject of the second part is Young's convolution inequality, which we generalize beyond what is currently known in the literature. 

Finally, Chapter \ref{sectionmainpart} brings all the ``pieces of the puzzle'' from previous chapters together. In the climax of the paper (see Theorem \ref{KolmogorovRieszWeilSudakovtheorem}) we provide an elegant and natural proof of the characterization of the relatively compact families in $L^p(G).$

\section{Arzel\`a-Ascoli theorem and one-point compactification}
\label{sectioncompactfamiliesC0}

The classic version of the Arzel\`a-Ascoli theorem characterizes relatively compact families in $C(X),$ the space of continuous functions on a compact space $X$ (we assume that \textit{every} topological space in this paper is Hausdorff). Such version of the result appears in many sources throughout the literature\footnote{See Theorem 4.43 in \cite{Follandrealanalysis}, p. 137 or Corollary 10.49 in \cite{Knapp}, p. 479 or Theorem A5 in \cite{Rudin}, p. 394.}, although some authors assume additionally that the space $X$ is metric for ``simplicity of the proof''\footnote{See Theorem 4.25 in \cite{Brezis}, p. 111 or Theorem 23.2 in \cite{Choquet}, p. 99 or Theorem 6.3.1 in \cite{Dixmiertopology}, p. 69 or Theorem 11.28 in \cite{Rudinrealandcomplex}, p. 245.}.

\begin{thm}(classic version of the Arzel\`a-Ascoli theorem)\\
Let $X$ be a compact space. The family $\Ffamily\subset C(X)$ is relatively compact if and only if 
\begin{itemize}
	\item $\Ffamily$ is pointwise bounded, i.e. for every $x\in X$ there exists $M_x>0$ such that 
	$$\forall_{f\in\Ffamily}\ |f(x)| \leqslant M_x,$$
	\item $\Ffamily$ is equicontinuous at every point, i.e. for every $x\in X$ and $\eps>0$ there exists an open neighbourhood $U_x$ of $x$ such that 
	$$\forall_{\substack{y\in U_x,\\ f\in\Ffamily}}\ |f(y)- f(x)| \leqslant \eps.$$
\end{itemize}
\label{classicalAA}
\end{thm}

For our purposes, we need a more general result. We need a characterization of relatively compact families in $C_0(X),$ the space of continuous functions on a locally compact space $X$ which vanish at infinity, i.e. for every $\eps>0$ there exists a compact set $K$ in $X$ (we denote this situation by $K\Subset X$) such that 
$$\forall_{x\in X\backslash K}\ |f(x)| \leqslant \eps.$$ 

\noindent
Looking through the literature, one usually stumbles across versions of the Arzel\`a-Ascoli theorem in which the supremum-norm topology is replaced with the compact-open topology\footnote{See Theorem 3.4.20 in \cite{Engelking}, p. 163 or Theorem 17 in \cite{Kelley}, p. 233 or Theorem 47.1 in \cite{Munkres}, p. 290.}. 

\begin{thm}(Arzel\`a-Ascoli theorem for $C_0(X)$ with the compact-open topology)\\
Let $X$ be a locally compact space. The family $\Ffamily \subset C_0(X)$ is relatively compact in the compact-open topology if and only if
\begin{itemize}
	\item $\Ffamily$ is pointwise bounded,
	\item $\Ffamily$ is equicontinuous at every point.
\end{itemize}
\label{AAC0compactopentopology}
\end{thm}

The above result seems promising (to say the least), but it has a ``splinter'' in the form of the switch of topologies. Is there a version of the Arzel\`a-Ascoli theorem for $C_0(X)$ in which the supremum-norm topology is not replaced with compact-open topology (or any other topology for that matter)? The literature is surprisingly scarce in this respect. One possible reference is Exercise 17 on page 182 in John B. Conway's ``A Course in Functional Analysis''\footnote{See \cite{Conway}.}. However, Conway does not provide a solution to his exercise, leaving this task to the reader. Another reference is the monograph ``Integral Systems and Stability of Feedback Systems'' by Constantin Corduneanu\footnote{See \cite{Corduneanu}, p. 62.}. Unlike Conway, Corduneanu does provide a proof, but (in the author's opinion) he does not provide an insight into \textit{why} the theorem is true\footnote{The idea of Cordeanu's proof is basically to take a sequence and repeatedly choose some subsequences (using the assumptions) until we arrive at a convergent subsequence. This demonstrates the relative compactness of the family in question but still leaves the reader wondering ``why is this theorem true?''.}. We aim to rectify this imperfection.

For any locally compact space $X$, there exists a compact space $X_{\infty}$ such that
\begin{itemize}
	\item $X$ is a subspace of $X_{\infty},$
	\item the closure of $X$ is $X_{\infty},$
	\item $X_{\infty}\backslash X$ is a singleton or an empty set.
\end{itemize}
 
\noindent
The space $X_{\infty}$ is called the \textit{one-point compactification} of $X$\footnote{See Theorem 3.5.11 in \cite{Engelking}, p. 169 or Theorem 29.1 in \cite{Munkres}, p. 183 or Proposition 1.7.3 in \cite{Pedersen}, p. 37.}. Its topology allows for a simple description $-$ it consists of open sets in $X$ (the topology of $X$) plus all sets of the form $X_{\infty}\backslash K,$ where $K\Subset X$. 

If $X$ happens to be compact itself, then $X = X_{\infty}.$ Otherwise, $X_{\infty}\backslash X$ is a singleton, whose element is often denoted by $\infty$. A common example of the one-point compactification is $\complex_{\infty},$ which arises in the field of complex analysis as a natural domain for M\"obius transformations. It turns out that $\complex_{\infty}$ is homeomorphic to a sphere $S^2,$ called the \textit{Riemann sphere}, and the element $\infty$ may be regarded as the ``north pole'' of that sphere\footnote{See \cite{GilmanKraRodriguez}, p. 38 or \cite{GongGong}, p. 10 or \cite{SteinShakarchi}, p. 88-89.}.

\begin{thm}(Arzel\`a-Ascoli theorem for $C_0(X)$ with the supremum-norm topology)\\
Let $X$ be a locally compact space. The family $\Ffamily \subset C_0(X)$ is relatively compact in the supremum-norm topology if and only if
\begin{itemize}
	\item $\Ffamily$ is pointwise bounded,
	\item $\Ffamily$ is equicontinuous at every point,
	\item $\Ffamily$ is equivanishing, i.e. for every $\eps>0$ there exists $K\Subset X$ such that 
	\begin{gather} 
	\forall_{\substack{x\not\in K\\ f\in\Ffamily}}\ |f(x)| \leqslant \eps.
	\label{equivanishing}
	\end{gather}
\end{itemize}
\label{AAC0}
\end{thm}
\begin{proof}
For every $f\in C_0(X)$ we define a function $\Psi_f:X_{\infty}\longrightarrow \complex$ such that $\Psi_f|_X = f$ and $\Psi_f(\infty) = 0.$ Obviously, every $\Psi_f$ is continuous at each element $x\in X$. Furthermore, functions $\Psi_f$ are also continuous at $\infty:$ for a fixed $\eps>0$ we choose $K\Subset X$ such that (\ref{equivanishing}) is satisfied and put $U_{\infty} := X_{\infty}\backslash K$, which is an open neighbourhood of $\infty.$ Consequently, (\ref{equivanishing}) corresponds to 
$$\forall_{x\in U_{\infty}}\ |\Psi_f(x) - \Psi_f(\infty)|\leqslant \eps,$$

\noindent
which demonstrates that $\Psi_f \in C(X_{\infty})$ for every $f\in C_0(X).$

The crucial observation is that for every $f\in C_0(X)$ we have
$$\sup_{x\in X}\ |f(x)| = \sup_{x\in X_{\infty}}\ |\Psi_f(x)|,$$

\noindent
so the function $\Psi: C_0(X) \longrightarrow C(X_{\infty}),$ given by $\Psi(f) := \Psi_f,$ is an isometry. It is now clear that the following conditions are equivalent: 
\begin{itemize}
	\item $\Ffamily$ is relatively compact in $C_0(X),$
	\item $\Psi(\Ffamily)$ is relatively compact in $C(X_{\infty}),$
	\item $\Psi(\Ffamily)$ is pointwise bounded and equicontinuous at every point $x\in X_{\infty}$ (by the classic Arzel\`a-Ascoli theorem).
\end{itemize}
 
\noindent
Pointwise boundedness of $\Psi(\Ffamily)$ is equivalent to pointwise boundedness of $\Ffamily$ (since for every $\Psi_f\in\Psi(\Ffamily)$ we have $\Psi_f(\infty) = 0$). Furthermore, equicontinuity of $\Psi(\Ffamily)$ at element $x\in X$ is equivalent to equicontinuity of $\Ffamily$ at this element. Last but not least, the equicontinuity of $\Psi(\Ffamily)$ at $\infty$ means that for every $\eps>0$ there exists $K\Subset X$ such that 
$$\forall_{\substack{x\in X_{\infty}\backslash K\\ F\in\Psi(\Ffamily)}}\ |F(x) - F(\infty)| \leqslant \eps,$$

\noindent
which is equivalent to equivanishing of $\Ffamily.$ This concludes the proof.
\end{proof}

\section{$L^p-$properties}
\label{sectioncompactfamiliesLp}

Throughout the whole paper, we work under the assumption that
\begin{center}
\textit{$G$ is a locally compact (Hausdorff) group.}
\end{center}

\noindent
It turns out\footnote{See Chapter 1.3 in \cite{DeitmarEchterhoff} or Chapter 2.2 in \cite{FollandAHA} or Section 15 in \cite{HewittRoss} or Chapter 2 in \cite{Weil}.} that any such group $G$ admits a \textit{Haar measure} $\mu,$ i.e. a nonzero, Borel measure which is 
\begin{itemize}
	\item finite on compact sets,
	\item \textit{inner regular}, i.e. for every open set $U$ we have
	$$\mu(U) = \sup\ \{\mu(K)\ :\ K\subset U,\ K - \text{ compact}\},$$
	
	\item \textit{outer regular}, i.e. for every measurable set $A$ we have
	$$\mu(A) = \inf\ \{\mu(U)\ :\ A \subset U,\ U - \text{ open}\},$$
	
	\item \textit{left-invariant}, i.e. for every $x\in G$ and measurable set $A$ we have $\mu(xA) = \mu(A).$
\end{itemize}

\noindent
Haar measure is not determined uniquely, but is ``unique up to a positive constant'', i.e. if $\mu_1,\mu_2$ are two (left) Haar measures on $G$ then there exists a constant $c>0$ such that $\mu_1 = c\cdot \mu_2$. Furthermore, similarly to the \textit{left} Haar measure, one can also prove the existence (and uniqueness up to positive constant) of the \textit{right} Haar measure. In general, these two objects need not coincide on $G$ - if they do, we call $G$ \textit{unimodular}. The family of unimodular groups contains all locally compact \textit{abelian} groups as well as all \textit{compact} groups.

As far as the history is concerned, the construction of the Haar measure is commonly attributed to Alfr\'ed Haar, although the contribution of Henri Cartan or Andr\'e Weil deserves recognition as well\footnote{See \cite{Cartan, Haar} or Chapter 2 in \cite{Weil}.}. For a detailed account of the subject we refer the Reader to Diestel's and Spalsbury's monograph ``The joys of Haar measure''\footnote{See \cite{DiestelSpalsbury}.}.

From this point onwards
\begin{center}
\textit{we assume that $p\geqslant 1$ and $p'$ is its H\"{o}lder conjugate.}
\end{center} 

\noindent
Our aim is to characterize relatively compact families of $L^p(G)$ and to achieve this goal we introduce three concepts: firstly, we say that a family $\Ffamily\subset L^p(G)$ is \textit{$L^p-$bounded} if there exists $M>0$ such that 
$$\forall_{f\in\Ffamily}\ \|f\|_p \leqslant M.$$

\noindent
Secondly, a family $\Ffamily\subset L^p(G)$ is called \textit{$L^p-$equicontinuous} if for every $\eps>0$ there exists an open neighbourhood $U_e$ of the neutral element $e\in G$ such that  
$$\forall_{f\in\Ffamily}\ \sup_{x\in U_e}\ \|L_xf-f\|_p \leqslant \eps \hspace{0.4cm}\text{and}\hspace{0.4cm} \sup_{x\in U_e}\ \|R_xf-f\|_p\leqslant \eps,$$

\noindent
where $L_xf(y) := f(xy)$ and $R_xf(y) := f(yx)$ are the \textit{left} and \textit{right shift operators}, respectively. Thirdly, $\Ffamily\subset L^p(G)$ is said to be \textit{$L^p-$equivanishing} if for every $\eps>0$ there exists $K\Subset G$ such that  
$$\forall_{f\in \Ffamily}\ \int_{G\backslash K}\ |f(y)|^p\ dy \leqslant \eps.$$

We now demonstrate that all three properties above ($L^p-$boundedness, $L^p-$equicontinuity and $L^p-$equivanishing) are ``inherited'' when the family $\Ffamily\subset L^p(G)$ is convolved with a function $\phi\in C_c(G),$ i.e. a continuous function with compact support. 

As far as the notation is concerned, $C^b(G)$ in the theorem below denotes the Banach space (with supremum-norm) of continuous and bounded functions on $G$.

\begin{thm}
Let $\Ffamily\subset L^p(G)$ be $L^p-$bounded. If $\phi \in C_c(G)$ then $\Ffamily\star \phi\subset C^b(G)$ is bounded. 
\label{Lpboundedness}
\end{thm}
\begin{proof}
Let $M>0$ be a $L^p-$bound on the family $\Ffamily$. We divide the proof into two steps.

\vspace{0.3cm}
\noindent
\textbf{Step 1.} \textit{Proving that $f\star \phi$ is continuous for every $f\in \Ffamily$}

\vspace{0.3cm}
Fix $\eps>0,\ x_*\in G$ and suppose that $p > 1$. By Proposition 2.41 in \cite{FollandAHA}, p. 53 there exists a symmetric open neighbourhood $U_e$ of the neutral element such that 
\begin{gather}
\forall_{x\in U_e}\ \left(\int_G\ |\phi\circ\iota(xy) - \phi\circ\iota(y)|^{p'}\ dy\right)^{\frac{1}{p'}} \leqslant \frac{\eps}{M},
\label{continuityepsoverM}
\end{gather}

\noindent
where $\iota:G\longrightarrow G$ stands for the \textit{inverse function}, i.e. $\iota(x) := x^{-1}$. For $x\in x_*U_e$ and $f\in\Ffamily$ we have 
\begin{equation*}
\begin{split}
|f\star\phi(x) - f\star\phi(x_*)| &= \left|\int_G\ f(y)\cdot \phi\left(y^{-1}x\right) - f(y)\cdot \phi\left(y^{-1}x_*\right)\ dy\right| \\
\stackrel{\text{H\"{o}lder ineq.}}{\leqslant}& \|f\|_p\cdot \left(\int_G\ \left|\phi\left(y^{-1}x\right) - \phi\left(y^{-1}x_*\right)\right|^{p'}\ dy \right)^{\frac{1}{p'}} \\
\stackrel{y\mapsto x_*y}{\leqslant}& M\cdot \left(\int_G\ \left|\phi\left(y^{-1}x_*^{-1}x\right) - \phi\left(y^{-1}\right)\right|^{p'}\ dy \right)^{\frac{1}{p'}}
\\
=& M\cdot \left(\int_G\ |\phi\circ\iota\left(x^{-1}x_*y\right) - \phi\circ\iota\left(y\right)|^{p'}\ dy \right)^{\frac{1}{p'}} \stackrel{(\ref{continuityepsoverM})}{\leqslant} \eps.
\end{split}
\end{equation*}

\noindent
This proves that $\Ffamily\star\phi$ is a family of continuous functions if $p > 1$. 

For $p = 1$ let us again fix $\eps > 0$ and $x_*\in G.$ By Lemma 1.3.6 in \cite{DeitmarEchterhoff}, p. 11 function $\phi$ is uniformly continuous, so there exists a symmetric open neighbourhood $V_e$ of the neutral element such that 
\begin{gather}
\forall_{x\in x_*V_e}\sup_{y\in G}\ \left|\phi\left(y^{-1}x\right) - \phi\left(y^{-1}x_*\right)\right| \leqslant \frac{\eps}{M}.
\label{phiisunicont} 
\end{gather}

\noindent
Consequently, for $x\in x_*V_e$ and $f\in\Ffamily$ we have
\begin{gather*}
|f\star\phi(x) - f\star\phi(x_*)| = \left|\int_G\ f(y)\cdot \phi\left(y^{-1}x\right) - f(y)\cdot \phi\left(y^{-1}x_*\right)\ dy\right| \\
\leqslant \sup_{y\in G}\ \left|\phi\left(y^{-1}x\right) - \phi\left(y^{-1}x_*\right)\right| \cdot \int_G\ |f(y)|\ dy
\stackrel{(\ref{phiisunicont})}{\leqslant} \frac{\eps}{M}\cdot \|f\|_1 \leqslant \eps.
\end{gather*}

\noindent
This demonstrates that $\Ffamily\star\phi$ is a family of continuous functions if $p=1.$

\vspace{0.3cm}
\noindent
\textbf{Step 2.} \textit{Proving that the family $\Ffamily\star \phi$ is bounded in supremum norm}

\vspace{0.3cm}
Suppose that $p>1$ and put $K := \iota(\supp(\phi)).$ We observe that
\begin{gather*}
\forall_{\substack{x\in G\\ f\in\Ffamily}}\ \bigg|\int_G\ f(y)\cdot \phi\left(y^{-1}x\right)\ dy\bigg| \stackrel{\text{H\"{o}lder ineq.}}{\leqslant} \|f\|_p\cdot \left(\int_G\ \left|\phi\left(y^{-1}x\right)\right|^{p'}\ dy\right)^{\frac{1}{p'}} \\
\leqslant M\cdot \|\phi\|_{\infty}\cdot \mu\big(xK\big)^{\frac{1}{p'}} = M\cdot \|\phi\|_{\infty}\cdot \mu(K)^{\frac{1}{p'}},
\end{gather*}

\noindent
where the second inequality stems from the fact that if 
$$y^{-1}x \not\in\supp(\phi) \ \Longleftrightarrow \ y\not\in xK,$$

\noindent
then $\phi\big(y^{-1}x\big) = 0$. We conclude that $\Ffamily\star\phi\subset C^b(G)$ is bounded if $p > 1$.

For $p = 1$, the reasoning is even simpler:
\begin{gather*}
\forall_{\substack{x\in G\\ f\in\Ffamily}}\ \bigg|\int_G\ f(y)\cdot \phi\left(y^{-1}x\right)\ dy\bigg| \leqslant \int_G\ |f(y)|\ dy\cdot \|\phi\|_{\infty} \leqslant M\cdot \|\phi\|_{\infty}.
\end{gather*}

\noindent
This concludes the proof.
\end{proof}

Going off on a tangent for a brief moment, one may wonder whether $\mu(\iota(\supp(\phi)))$ (which appears in the second step of the proof above) is equal to $\mu(\supp(\phi)).$ In general, this is not the case! However if (and only if) the group $G$ is unimodular (left and right Haar measure coincide) then for every measurable set $A$ we have $\mu(A) = \mu(\iota(A))$\footnote{See \cite{Pap}, p. 1112.}.

\begin{thm}
Let $\Ffamily\subset L^p(G)$ be $L^p-$equicontinuous. If $\phi\in C_c(G)$, then $\Ffamily\star\phi$ is equicontinuous. 
\label{Lpequicont}
\end{thm}
\begin{proof}
Firstly, suppose that $p > 1,\ \eps>0,\ x_*\in G$ and $K := \iota(\supp(\phi))$. By $L^p-$equicontinuity of the family $\Ffamily$, let $U_e$ be a symmetric open neighbourhood of the neutral element such that 
\begin{gather}
\forall_{f\in\Ffamily}\ \sup_{x\in U_e}\ \|L_xf-f\|_p \leqslant \frac{\eps}{\|\phi\|_{\infty}\cdot \mu(K)^{\frac{1}{p'}}}.
\label{Lxfminusfpfraceps}
\end{gather}

\noindent
Observe that for every $x\in G$ and $f\in\Ffamily$ we have 
\begin{gather}
f\star\phi(x) = \int_G\ f(y)\cdot \phi\left(y^{-1}x\right)\ dy \stackrel{y\mapsto xy}{=} \int_G\ f(xy)\cdot \phi\left(y^{-1}\right)\ dy.
\label{fstarphi} 
\end{gather}

\noindent
Consequently, for every $x\in x_*U_e$ and $f\in\Ffamily$ we have
\begin{gather*}
|f\star\phi(x) - f\star\phi(x_*)| \stackrel{(\ref{fstarphi})}{\leqslant} \int_G\ |f(xy)-f(x_*y)|\cdot \left|\phi\left(y^{-1}\right)\right|\ dy \\
\stackrel{\text{H\"{o}lder ineq.}}{\leqslant} \left(\int_G\ |f(xy)-f(x_*y)|^p\ dy\right)^{\frac{1}{p}}\cdot \left(\int_G\ |\phi\left(y^{-1}\right)|^{p'}\ dy\right)^{\frac{1}{p'}}\\
\leqslant \left(\int_G\ |f(xy)-f(x_*y)|^p\ dy\right)^{\frac{1}{p}}\cdot \|\phi\|_{\infty}\cdot \mu(K)^{\frac{1}{p'}} \\
\stackrel{y\mapsto x_*^{-1}y}{=} \left(\int_G\ \left|f\left(xx_*^{-1}y\right)-f(y)\right|^p\ dy\right)^{\frac{1}{p}}\cdot \|\phi\|_{\infty}\cdot \mu(K)^{\frac{1}{p'}} \stackrel{(\ref{Lxfminusfpfraceps})}{\leqslant}\eps,
\end{gather*}

\noindent
which ends the proof if $p>1$.

For $p=1$ let us again fix $\eps>0$ and $x_*\in G$. By $L^1-$equicontinuity of the family $\Ffamily,$ let $U_e$ be a symmetric open neighbourhood of the neutral element such that 
\begin{gather}
\forall_{f\in\Ffamily}\ \sup_{x\in U_e}\ \|L_xf - f\|_1 \leqslant \frac{\eps}{\|\phi\|_{\infty}}.
\label{whatifweuseL1equicont}
\end{gather}

\noindent
For every $x\in x_*U_e$ and $f\in\Ffamily$, we obtain
\begin{gather*}
|f\star\phi(x) - f\star\phi(x_*)| \stackrel{(\ref{fstarphi})}{\leqslant} \int_G\ |f(xy)-f(x_*y)|\cdot \left|\phi\left(y^{-1}\right)\right|\ dy \\
\leqslant \|\phi\|_{\infty}\cdot \int_G\ |f(xy)-f(x_*y)|\ dy \stackrel{y\mapsto x_*^{-1}y}{=} \|\phi\|_{\infty}\cdot \int_G\ \left|f\left(xx_*^{-1}y\right)-f(y)\right|\ dy \stackrel{(\ref{whatifweuseL1equicont})}{\leqslant}\eps,
\end{gather*}

\noindent
which ends the proof.
\end{proof}

Before we demonstrate how $L^p-$equivanishing behaves ``under convolution'', we note the following result:

\begin{lemma}
If $K_1$ and $K_2$ are compact subsets of $G$, then there exists a compact set $D$ such that 
\begin{gather}
\forall_{x\not\in D}\ xK_1\cap K_2 = \emptyset.
\label{Dproperty}
\end{gather}
\label{constructionofD}
\end{lemma}
\begin{proof}
We put $D:= K_2\cdot K_1^{-1}.$ It is obviously compact by the continuity of group operations and the compactness of $K_1$ and $K_2.$ To prove that (\ref{Dproperty}) is true, suppose that $z\in xK_1\cap K_2$ for some $x\not\in D.$ In particular $z \in K_2$ and $z = xy$ for some $y\in K_1.$ This means that 
$$x = zy^{-1} \in K_2\cdot K_1^{-1} = D,$$

\noindent
which is a contradiction. This concludes the proof.
\end{proof}

\begin{thm}
Let $\Ffamily\subset L^p(G)$ be $L^p-$equivanishing. If $\phi\in C_c(G)$ is such that $\phi(e)\neq 0$ ($e$ is the neutral element of $G$), then $\Ffamily\star\phi$ is equivanishing. 
\label{Lpequivanish}
\end{thm}
\begin{proof}
Firstly, suppose that $p > 1$. We fix $\eps>0$ and choose $K\Subset G$ such that 
\begin{gather}
\forall_{f\in\Ffamily}\ \left(\int_{G\backslash K}\ |f|^p\ d\mu\right)^{\frac{1}{p}} \leqslant \frac{\eps}{\|\phi\|_{\infty}\cdot \mu\big(\iota(\supp(\phi))\big)^{\frac{1}{p'}}}.
\label{GbackslashKepsilon}
\end{gather}

\noindent
Let us denote $U := \iota(\{\phi\neq0\})$, which is open and relatively compact neighbourhood of the neutral element with $\overline{U} = \iota(\supp(\phi))$. By Lemma \ref{constructionofD}, there exists $D\Subset G$ such that 
\begin{gather}
\forall_{x\not\in D}\ x\overline{U} \cap K = \emptyset.
\label{capKemptyset}
\end{gather}

\noindent
For $x\not\in D$ and $f\in\Ffamily$ we have
\begin{gather*}
|f\star\phi(x)| \leqslant \int_G\ \left|f(y)\cdot \phi\left(y^{-1}x\right)\right|\ dy \leqslant \|\phi\|_{\infty}\cdot \int_{x\overline{U}}\ |f|\ d\mu \\
\stackrel{\text{H\"{o}lder ineq.}}{\leqslant} \|\phi\|_{\infty}\cdot \left(\int_{x\overline{U}}\ |f|^p\ d\mu\right)^{\frac{1}{p}}\cdot \mu(x\overline{U})^{\frac{1}{p'}} \\
\stackrel{(\ref{capKemptyset})}{\leqslant} \|\phi\|_{\infty}\cdot \left(\int_{G\backslash K}\ |f|^p\ d\mu\right)^{\frac{1}{p}}\cdot \mu\big(x\overline{U}\big)^{\frac{1}{p'}} \stackrel{(\ref{GbackslashKepsilon})}{\leqslant} \eps,
\end{gather*}

\noindent
which ends the proof if $p > 1$.

For $p=1$ let us again fix $\eps>0$ and choose $K\Subset G$ such that 
\begin{gather}
\forall_{f\in\Ffamily}\ \int_{G\backslash K}\ |f|\ d\mu \leqslant \frac{\eps}{\|\phi\|_{\infty}}.
\label{GbackslashKepsilon2}
\end{gather}

\noindent
Furthermore, we choose $U$ and $D$ as previously. For every $f\in\Ffamily$ and $x\not\in D$ we have
\begin{gather*}
|f\star\phi(x)| \leqslant \int_G\ \left|f(y)\cdot \phi\left(y^{-1}x\right)\right|\ dy \leqslant \|\phi\|_{\infty}\cdot \int_{x\overline{U}}\ |f|\ d\mu \stackrel{(\ref{capKemptyset})}{\leqslant} \|\phi\|_{\infty}\cdot \int_{G\backslash K}\ |f|\ d\mu \stackrel{(\ref{GbackslashKepsilon2})}{\leqslant} \eps.
\end{gather*}

\noindent
This concludes the proof.
\end{proof}

\section{Approximation theorem and Young's convolution inequality}
\label{sectionYoung}

The first part of this chapter is devoted to demonstrating that the family $\Ffamily\star\phi$ (for a suitably chosen $\phi\in C_c(G)$) approximates the family $\Ffamily$ in $L^p-$norm. To this end, we recall \textit{Minkowski's integral inequality}\footnote{See Theorem 6.19 in \cite{Follandrealanalysis}, p. 194.}:

\begin{thm}
Let $X,Y$ be $\sigma-$finite measure spaces, $1\leqslant p < \infty$ and let $F:X\times Y \longrightarrow \complex$ be a measurable function. Then
\begin{gather}
\left(\int_X \left(\int_Y\ |F(x,y)|\ dy\right)^p dx\right)^{\frac{1}{p}} \leqslant \int_Y \left(\int_X\ |F(x,y)|^p\ dx\right)^{\frac{1}{p}} dy.
\label{Minkineq}
\end{gather}
\end{thm}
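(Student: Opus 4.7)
The plan is to split into two cases according to whether $p=1$ or $1<p<\infty$. The case $p=1$ is essentially a one-line application of Tonelli's theorem: both sides of (\ref{Minkineq}) equal $\int_X \int_Y |F(x,y)|\ dy\ dx$, since $\sigma$-finiteness is precisely what licenses the interchange of the order of integration for non-negative measurable functions. Hence for $p=1$ the inequality is in fact an equality, and there is nothing more to do.

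For $1<p<\infty$ I would follow the classical H\"older-duality route. Set $G(x):=\int_Y |F(x,y)|\ dy$, which is a non-negative measurable function of $x$ by Tonelli, and compute
\begin{gather*}
\|G\|_p^p = \int_X G(x)^{p-1}\ G(x)\ dx = \int_X G(x)^{p-1}\int_Y |F(x,y)|\ dy\ dx.
\end{gather*}
Swapping the order of integration by Tonelli and then applying H\"older in the $x$-variable with conjugate exponents $p'$ and $p$ gives
\begin{gather*}
\int_X G(x)^{p-1}\ |F(x,y)|\ dx \leq \left(\int_X G(x)^{(p-1)p'}\ dx\right)^{\frac{1}{p'}}\ \left(\int_X |F(x,y)|^p\ dx\right)^{\frac{1}{p}}.
\end{gather*}
Since $(p-1)p'=p$, the first factor on the right is $\|G\|_p^{p/p'}=\|G\|_p^{p-1}$. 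Integrating in $y$ and dividing both sides by $\|G\|_p^{p-1}$ yields the desired inequality (\ref{Minkineq}).

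The one delicate point, and what I expect to be the \emph{main obstacle}, is justifying that last division: it is legitimate only when $0<\|G\|_p<\infty$. The standard way around this is a truncation-then-limit argument. First, establish (\ref{Minkineq}) under the extra hypothesis that $F$ is bounded and supported on a rectangle $A\times B\subset X\times Y$ with $\mu(A),\nu(B)<\infty$; in that restricted setting $G$ is bounded and supported on $A$, so $\|G\|_p<\infty$ and the division step goes through without hesitation. The general case then follows by writing an arbitrary measurable $F$ as an increasing pointwise limit of such truncations $F_n$ (which is possible precisely because $X$ and $Y$ are $\sigma$-finite) and invoking the monotone convergence theorem on both sides of the inequality. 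The degenerate situations $\|G\|_p=0$ and $\|G\|_p=\infty$ either render the inequality trivial or are absorbed into this limiting procedure.
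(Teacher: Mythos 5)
The paper does not actually prove this theorem: it is quoted as a known result with a citation to Folland and to Stein, and is used as a black box in the proof of Theorem \ref{choiceofphi}. So there is no in-paper argument to compare against; what can be said is that your proof is correct and is essentially the standard one found in the cited references. The $p=1$ case is indeed just Tonelli, and for $1<p<\infty$ the H\"older computation with $G(x)=\int_Y |F(x,y)|\,dy$, the identity $(p-1)p'=p$, and the resulting bound $\|G\|_p^p \leq \|G\|_p^{p-1}\int_Y \left(\int_X |F(x,y)|^p\,dx\right)^{1/p}dy$ are all in order. You also correctly identify the one genuinely delicate step --- the division by $\|G\|_p^{p-1}$ --- and your fix (first prove the inequality for $|F|$ bounded and supported on a finite-measure rectangle, where $\|G\|_p<\infty$ is automatic and $\|G\|_p=0$ makes the claim trivial, then exhaust $X\times Y$ by such rectangles using $\sigma$-finiteness and pass to the limit on both sides by monotone convergence) is exactly the standard repair. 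I see no gap.
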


To a certain degree, the theorem below resembles Proposition 2.42 in Folland's ``A Course in Abstract Harmonic Analysis''\footnote{See \cite{FollandAHA}, p. 53.}. However, Folland's proposition focuses on a single function $f$ and our approximation theorem deals with an $L^p-$equicontinuous family $\Ffamily\subset L^p(G)$.

\begin{thm}
If $\Ffamily\subset L^p(G)$ is $L^p-$equicontinuous, then for every $\eps>0$ there exists a function $\phi \in C_c(G)$ such that 
$$\forall_{f\in \Ffamily}\ \|f\star \phi - f\|_p \leqslant \eps.$$
\label{choiceofphi}
\end{thm}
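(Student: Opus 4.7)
The plan is to build $\phi$ as an approximate identity concentrated near $e$ and then exchange the order of integration via Minkowski's integral inequality, turning $\|f\star\phi - f\|_p$ into an average of the quantities $\|R_yf-f\|_p$, which are small uniformly in $f\in\Ffamily$ by $L^p-$equicontinuity. The substitution $y\mapsto xy$ recorded in (\ref{fstarphi}) is the catalyst: it rewrites $f\star\phi(x) = \int_G\ f(xy)\phi(y^{-1})\ dy$, and once we impose $\int_G\ \phi(y^{-1})\ dy = 1$, subtracting $f(x) = f(x)\int_G\phi(y^{-1})\ dy$ yields
$$f\star\phi(x) - f(x) = \int_G\ [f(xy)-f(x)]\ \phi(y^{-1})\ dy = \int_G\ (R_yf - f)(x)\ \phi(y^{-1})\ dy.$$

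I would then proceed in three steps. First, invoke $L^p-$equicontinuity to secure a symmetric open identity neighbourhood $U_e$ satisfying $\sup_{y\in U_e}\ \|R_yf - f\|_p < \eps$ uniformly in $f\in\Ffamily$. Second, construct $\phi$ via Urysohn's lemma: pick a non-negative $\psi\in C_c(G)$ with $\supp(\psi)\subset U_e$ and $\psi(e) > 0$, so that continuity at $e$ forces $\int_G\ \psi(y^{-1})\ dy > 0$, and set $\phi := \psi / \int_G\ \psi(y^{-1})\ dy$. The symmetry of $U_e$ guarantees that $y\mapsto\phi(y^{-1})$ is also supported in $U_e$, while non-negativity of $\phi$ ensures the Minkowski bound below is tight. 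Third, apply Minkowski's integral inequality (\ref{Minkineq}) to the kernel $F(x,y) := [f(xy)-f(x)]\phi(y^{-1})$, yielding
$$\|f\star\phi - f\|_p \leq \int_G\ \phi(y^{-1})\ \|R_yf - f\|_p\ dy \leq \eps\int_G\ \phi(y^{-1})\ dy = \eps.$$

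The one genuine subtlety I anticipate is the normalization: $\int_G\ \phi(y^{-1})\ dy$ differs from $\|\phi\|_1$ by the modular function, so the condition must be imposed on the inverted integral rather than on $\phi$ itself; the Urysohn-plus-rescaling recipe is the cleanest way to bypass this. Beyond that, the argument is a streamlined marriage of Minkowski and equicontinuity, and in contrast to the classical single-function proof of Folland's Proposition 2.42 (which extends from $C_c(G)$ by density), it produces uniformity over $\Ffamily$ for free.
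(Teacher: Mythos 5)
Your proposal follows essentially the same route as the paper: choose $\phi$ with $\int_G \phi\circ\iota\ d\mu = 1$ and $\supp(\phi\circ\iota)$ inside the identity neighbourhood furnished by $L^p-$equicontinuity, rewrite $f\star\phi - f$ as $\int_G (f(xy)-f(x))\phi\left(y^{-1}\right)\ dy$, and apply Minkowski's integral inequality to bound this by $\sup_{y\in U}\|R_yf-f\|_p$; your remark on normalizing the inverted integral rather than $\|\phi\|_1$ is exactly how the paper handles the modular-function issue. The one step the paper includes that you omit is the verification that the kernel $F(x,y)=(f(xy)-f(x))\phi\left(y^{-1}\right)$ vanishes off a $\sigma-$compact set, which is needed to legitimize Minkowski's integral inequality (stated for $\sigma-$finite measure spaces) when $G$ itself is not $\sigma-$finite.
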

\begin{proof}
Fix $\eps>0$ and let $U_e$ be the open neighbourhood of the neutral element such that 
\begin{gather}
\forall_{f\in \Ffamily}\ \sup_{y\in U_e}\ \|R_yf - f\|_p \leqslant \eps.
\label{Ryfminusfpeps}
\end{gather}

\noindent
We fix $f\in\Ffamily$ and choose $f_B$ to be a Borel-measurable function such that $f = f_B$ almost everywhere. Since $G$ is a Tychonoff space, we can pick $\phi\in C_c(G)$ such that 
\begin{itemize}
	\item $\phi(e)\neq 0,\ \phi \geqslant 0,$
	\item $\int_G\ \phi\circ \iota\ d\mu = 1,$
	\item $\supp(\phi\circ\iota)\subset U_e.$
\end{itemize}

\noindent
For every $x\in G$ we have 
\begin{equation*}
\begin{split}
f\star\phi(x)-f(x) &= \int_G\ f(y)\cdot \phi\left(y^{-1}x\right)\ dy - f(x)\cdot \int_G\ \phi\left(y^{-1}\right)\ dy \\
&= \int_G\ \big(f(xy) - f(x)\big)\cdot \phi\left(y^{-1}\right)\ dy \\
&= \int_G\ \big(f_B(xy) - f_B(x)\big)\cdot \phi\left(y^{-1}\right)\ dy.
\end{split}
\end{equation*}

We put 
$$F(x,y) := \big(f_B(xy) - f_B(x)\big)\cdot \phi\left(y^{-1}\right),$$ 

\noindent
which is Borel-measurable as a composition of the following Borel-measurable functions:
\begin{equation*}
\begin{split}
&F_1 : (x,y)\mapsto (x,y,y),\\
&F_2 : (x,y,z)\mapsto (x,y,z^{-1}),\\
&F_3 : (x,y,z)\mapsto (x,xy,\phi(z)),\\
&F_4 : (x,y,z)\mapsto \left(f_B(x),f_B(y),z\right),\\
&F_5 : (x,y,z)\mapsto (y-x)z. 
\end{split}
\end{equation*}

\noindent
Observe that if we replace $f_B$ with $f$ in $F_4$, then $F$ does not need to be Borel-measurable (or even measurable), since a composition of a measurable function with a continuous function need not be measurable.
 
Furthermore, since $f_B$ and $\phi$ are integrable with $p-$th power, then $\supp(f_B)$ and $\supp(\phi)$ are $\sigma-$compact\footnote{See Corollary 1.3.5 in \cite{DeitmarEchterhoff}, p. 10.}. Hence, also the sets 
$$\left(\supp(f_B)\cdot\supp(\phi)\right)\times\iota(\supp(\phi)) \hspace{0.4cm}\text{and}\hspace{0.4cm} \supp(f_B)\times \iota(\supp(\phi))$$

\noindent
are $\sigma-$compact. Following a series of logical implications:
\begin{gather*}
(x,y)\in \{F\neq 0\} \ \Longrightarrow \ \bigg(f_B(xy)-f_B(x)\neq 0 \hspace{0.4cm}\text{and}\hspace{0.4cm} \phi\left(y^{-1}\right) \neq 0\bigg)\\
\Longrightarrow\ \bigg(\left(xy\in\supp(f_B) \hspace{0.4cm}\text{or}\hspace{0.4cm} x\in\supp(f_B)\right) \hspace{0.4cm}\text{and}\hspace{0.4cm} y^{-1}\in\supp(\phi) \bigg)\\
\Longrightarrow\ \bigg(\left(xy\in\supp(f_B) \hspace{0.4cm}\text{and}\hspace{0.4cm} y^{-1}\in\supp(\phi)\right) \hspace{0.4cm}\text{or}\hspace{0.4cm} \left(x\in\supp(f_B) \hspace{0.4cm}\text{and} \hspace{0.4cm} y^{-1}\in\supp(\phi)\right) \bigg)\\
\Longrightarrow\ \bigg((x,y)\in \left(\supp(f_B)\cdot\supp(\phi)\right)\times\iota(\supp(\phi)) \hspace{0.4cm}\text{or}\hspace{0.4cm} (x,y)\in\supp(f_B)\times\iota(\supp(\phi))\bigg)
\end{gather*}

\noindent
we conclude that $\{F\neq 0\}$ is $\sigma-$compact. 

Finally, we are in position to apply Minkowski's integral inequality:
\begin{equation*}
\begin{split}
\forall_{f\in\Ffamily}\ \|f\star\phi - f\|_p &= \|f_B\star\phi - f_B\|_p = \left(\int_G\ \bigg|\int_G\ \big(f_B(xy)-f_B(x)\big)\cdot \phi\left(y^{-1}\right)\ dy\bigg|^p\ dx\right)^{\frac{1}{p}} \\
&\stackrel{(\ref{Minkineq})}{\leqslant} \int_G\ \left( \int_G\ |f_B(xy)-f_B(x)|^p\cdot \left|\phi\left(y^{-1}\right)\right|^p\ dx\right)^{\frac{1}{p}} \ dy \\
&= \int_G\ \|R_yf-f\|_p\cdot \left|\phi\left(y^{-1}\right)\right| \ dy \leqslant \sup_{y\in U}\ \|R_yf-f\|_p \stackrel{(\ref{Ryfminusfpeps})}{\leqslant} \eps.
\end{split}
\end{equation*}

\noindent
This concludes the proof.
\end{proof}

We now turn to Young's convolution inequality. Let $\Delta$  represent the \textit{modular function} associated with the Haar measure $\mu$\footnote{See Chapter 1.4 in \cite{DeitmarEchterhoff} or Chapter 2.4 in \cite{FollandAHA} or Section 15 in \cite{HewittRoss} or Chapter 3.3 in \cite{ReiterStegeman}.}. In \cite{QuekYap} Quek and Yap prove Young's convolution inequality for unimodular groups $-$ these are locally compact groups in which $\Delta = 1$ (equivalently, left and right Haar measures coincide). Below we present our take on Young's convolution inequality without the assumption of unimodularity.

\begin{thm}
Let $p,q,r \in [1,\infty)$ be such that 
\begin{gather}
\frac{1}{r} = \frac{1}{p} + \frac{1}{q} - 1.
\label{pqrequality}
\end{gather}

\noindent
For functions $f\in L^p(G)$ and $g\in L^q(G)$, the convolution $f\star \left(\Delta^{\frac{1}{p'}}g\right)$ exists almost everywhere. Moreover, $f\star \left(\Delta^{\frac{1}{p'}}g\right) \in L^r(G)$ and we have
\begin{gather}
\|f\star \left(\Delta^{\frac{1}{p'}}g\right)\|_r \leqslant \|f\|_p \cdot \|g\|_q.
\label{younginequality}
\end{gather}
\label{Youngtheorem}
\end{thm}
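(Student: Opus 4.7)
The plan is to mimic the classical three-exponent H\"older proof of Young's inequality on $\reals^N$, with the factor $\Delta^{1/p'}$ inserted precisely to absorb the modular function that emerges after Haar inversion on a non-unimodular group. As a preliminary step I would record the substitution formula
$$\int_G h(y^{-1}x)\, dy \;=\; \int_G h(y)\, \Delta(y)^{-1}\, dy \qquad (x\in G),$$
valid for every non-negative measurable $h$. It follows from left-invariance, which shows that the left-hand side does not depend on $x$ (so one reduces to $x = e$), combined with the standard Haar inversion identity $\int h(y^{-1})\, dy = \int h(y)\,\Delta(y)^{-1}\, dy$.

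Next, exploiting the algebraic consequences $1 - \tfrac{p}{r} = \tfrac{p}{q'}$ and $1 - \tfrac{q}{r} = \tfrac{q}{p'}$ of (\ref{pqrequality}), I would split the pointwise integrand defining $f\star \Delta^{1/p'}g\,(x)$ as a product $A\cdot B\cdot C$, with
$$A = \bigl[|f(y)|^p\,|g(y^{-1}x)|^q\bigr]^{1/r},\quad B = |f(y)|^{p/q'},\quad C = \bigl[|g(y^{-1}x)|^q\,\Delta(y^{-1}x)\bigr]^{1/p'},$$
and then apply H\"older's inequality in the $y$-variable with exponents $r,q',p'$, whose reciprocals sum to $1$. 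Computing $B$ in $L^{q'}(dy)$ gives $\|f\|_p^{p/q'}$. The decisive computation is
$$\|C\|_{p'}^{p'} \;=\; \int_G |g(y^{-1}x)|^q\,\Delta(y^{-1}x)\, dy \;=\; \int_G |g(y)|^q\, dy \;=\; \|g\|_q^q,$$
where the first equality invokes the substitution formula above; this is exactly the role played by the exponent $1/p'$ on $\Delta$. The remaining factor $A$ contributes $\bigl(\int_G |f(y)|^p|g(y^{-1}x)|^q\, dy\bigr)^{1/r}$, which still depends on $x$.

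To finish, I would raise this pointwise estimate to the $r$-th power, integrate in $x$, and invoke Tonelli's theorem together with the left-invariance identity $\int_G |g(y^{-1}x)|^q\, dx = \|g\|_q^q$ to reduce the resulting double integral to $\|f\|_p^p\|g\|_q^q$. The exponent arithmetic $\tfrac{p}{r}+\tfrac{p}{q'} = \tfrac{q}{r}+\tfrac{q}{p'} = 1$ then collapses everything into (\ref{younginequality}), and almost-everywhere existence of $f\star \Delta^{1/p'}g$ falls out of the same pointwise bound (finiteness of the $y$-integral for a.e.\ $x$). The main obstacle is really the modular bookkeeping: the exponent $1/p'$ on $\Delta$ is not arbitrary but is forced by the requirement that the $\Delta$-term surviving in $\|C\|_{p'}^{p'}$ cancel exactly the $\Delta^{-1}$ produced by the Haar inversion, so that the H\"older bound is expressed in terms of $\|g\|_q$ rather than some weighted $L^q$-norm. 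Everything else is essentially the unimodular argument.
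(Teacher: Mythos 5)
Your proposal is correct and takes essentially the same route as the paper: the identical three-factor H\"older decomposition with exponents $r$, $q'$, $p'$ driven by the identities $(1-\tfrac{p}{r})q'=p$ and $(1-\tfrac{q}{r})p'=q$, and the same cancellation of the modular function in the $p'$-factor via Haar inversion. The only cosmetic difference is at the end, where the paper cites the $L^1\star L^1\subseteq L^1$ convolution inequality while you unwind it directly with Tonelli and left-invariance, which amounts to the same computation.
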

\begin{proof}
Observe that it suffices to prove (\ref{younginequality}), which will immediately establish that $f\star \left(\Delta^{\frac{1}{p'}}g\right) \in L^r(G)$ and consequently that the convolution exists almost everywhere.  

Firstly, we note a couple of useful equalities:
\begin{equation}
\begin{split}
&\frac{1}{r} + \frac{1}{q'} + \frac{1}{p'} = \frac{1}{r} + \bigg(1-\frac{1}{q}\bigg) + \bigg(1-\frac{1}{p}\bigg) \stackrel{(\ref{pqrequality})}{=} 1,\\
&\bigg(1-\frac{p}{r}\bigg)q' \stackrel{(\ref{pqrequality})}{=} p\bigg(1-\frac{1}{q}\bigg)q' = p, \\
&\bigg(1-\frac{q}{r}\bigg)p' \stackrel{(\ref{pqrequality})}{=} q\bigg(1-\frac{1}{p}\bigg)p' = q.
\end{split}
\label{coupleofequalities}
\end{equation}

\noindent
With the aid of H\"{o}lder's inequality, we perform the main calculation:
\begin{equation}
\begin{split}
&\left|f\star\left(\Delta^{\frac{1}{p'}}g\right)(x)\right| = \left|\int_G\ f(y)\cdot \Delta^{\frac{1}{p'}}\left(y^{-1}x\right)\cdot g\left(y^{-1}x\right)\ dy\right|\\
&\leqslant \int_G\ \bigg(|f|(y)^{\frac{p}{r}}\cdot |g|\left(y^{-1}x\right)^{\frac{q}{r}}\bigg) \cdot |f|(y)^{\left(1-\frac{p}{r}\right)} \cdot  \bigg(|g|\left(y^{-1}x\right)\bigg)^{\left(1-\frac{q}{r}\right)}\cdot \Delta^{\frac{1}{p'}}\left(y^{-1}x\right)\ dy \\
&\leqslant \bigg(\int_G\ |f|(y)^p \cdot |g|\left(y^{-1}x\right)^q\ dy\bigg)^{\frac{1}{r}}\cdot \bigg(\int_G\ |f|^{\left(1-\frac{p}{r}\right)q'}\ d\mu\bigg)^{\frac{1}{q'}}\cdot \bigg(\int_G\ |g|\left(y^{-1}x\right)^{\left(1-\frac{q}{r}\right)p'}\cdot \Delta\left(y^{-1}x\right)\ dy \bigg)^{\frac{1}{p'}} \\
&\stackrel{(\ref{coupleofequalities})}{=} \bigg(\int_G\ |f|(y)^p \cdot |g|\left(y^{-1}x\right)^q\ dy \bigg)^{\frac{1}{r}}\cdot \bigg(\int_G\ |f|^p\ d\mu \bigg)^{\frac{1}{q'}}\cdot \bigg(\int_G\ |g|\left(y^{-1}x\right)^q\cdot \Delta\left(y^{-1}x\right)\ dy \bigg)^{\frac{1}{p'}} \\
&\stackrel{y\mapsto xy}{=} \bigg(\int_G\ |f|(y)^p\cdot |g|\left(y^{-1}x\right)^q\ dy \bigg)^{\frac{1}{r}}\cdot \|f\|_p^{\frac{p}{q'}}\cdot \bigg(\int_G\ |g|\left(y^{-1}\right)^q\cdot \Delta\left(y^{-1}\right)\ dy \bigg)^{\frac{1}{p'}} \\
&=  \bigg(\int_G\ |f|(y)^p\cdot |g|\left(y^{-1}x\right)^q\ dy \bigg)^{\frac{1}{r}}\cdot \|f\|_p^{\frac{p}{q'}}\cdot \bigg(\int_G\ |g|^q\ d\mu \bigg)^{\frac{1}{p'}} \\
&= \bigg(\int_G\ |f|(y)^p\cdot |g|\left(y^{-1}x\right)^q\ dy \bigg)^{\frac{1}{r}}\cdot \|f\|_p^{\frac{p}{q'}}\cdot \|g\|_q^{\frac{q}{p'}} = \bigg(|f|^p\star|g|^q(x)\bigg)^{\frac{1}{r}}\cdot \|f\|_p^{\frac{p}{q'}}\cdot \|g\|_q^{\frac{q}{p'}}.
\end{split}
\label{mainyoungcalculation}
\end{equation}

\noindent
This leads to
\begin{equation*}
\begin{split}
\int_G\ \bigg|f\star\left(\Delta^{\frac{1}{p'}}g\right)(x)\bigg|^r\ dx &\leqslant \bigg(\int_G\ |f|^p\star|g|^q(x)\ dx\bigg)\cdot \|f\|_p^{\frac{pr}{q'}}\cdot \|g\|_q^{\frac{qr}{p'}}\\ 
&= \||f|^p\star|g|^q\|_1\cdot \|f\|_p^{\frac{pr}{q'}}\cdot \|g\|_q^{\frac{qr}{p'}}\\
&\leqslant \||f|^p\|_1 \cdot \||g|^q\|_1 \cdot \|f\|_p^{\frac{pr}{q'}}\cdot \|g\|_q^{\frac{qr}{p'}} = \|f\|_p^{p+\frac{pr}{q'}}\cdot \|g\|_q^{q+\frac{qr}{p'}},
\end{split}
\end{equation*}

\noindent
where the second inequality follows from Theorem 1.6.2 in \cite{DeitmarEchterhoff}, p. 26. Taking the $r$-th root, we conclude that 
$$\|f\star g\|_r \leqslant \|f\|_p^{\frac{p}{r}+\frac{p}{q'}}\cdot \|g\|_q^{q+\frac{qr}{p'}} = \|f\|_p\cdot \|g\|_q,$$

\noindent
which ends the proof.  
\end{proof}

\section{Main theorem}
\label{sectionmainpart}

Prior to demonstrating the main result of the paper we prove a simple lemma:

\begin{lemma}
Let $\Ffamily\subset L^p(G)$ be $L^p-$bounded and $L^p-$equicontinuous. If $\phi\in C_c(G)$ and $K\Subset G$ then $\Ffamily|_K\star \phi$ is relatively compact in $C_0(G)$ and 
$$\forall_{f\in \Ffamily}\ \supp\big(f|_K\star \phi\big) \subset K\cdot\supp(\phi).$$
\label{approximatingfamily}
\end{lemma}
\begin{proof}
Obviously, if $\Ffamily$ is $L^p-$bounded then so is $\Ffamily|_K.$ Furthermore, $\Ffamily|_K$ is trivially $L^p-$equivanishing, because it is supported in $K$. Last but not least, the $L^p-$equicontinuity of $\Ffamily$ implies the $L^p-$equicontinuity of $\Ffamily|_K.$

By Theorems \ref{Lpboundedness}, \ref{Lpequicont} and \ref{Lpequivanish} we conclude that $\Ffamily|_K\star \phi$ is bounded, equicontinuous and equivanishing in $C_0(G).$ By Theorem \ref{AAC0} we establish that $\Ffamily|_K\star \phi$ is relatively compact.

For the last part of the theorem, observe that for every $f\in\Ffamily$ we have
$$\forall_{x\in G}\ \big|f|_K\star\phi(x)\big| = \int_K\ \left|f(y)\cdot\phi\left(y^{-1}x\right)\right|\ dy.$$

\noindent
The integral on the right-hand side is 0 for $x \not\in K\cdot\supp(\phi),$ so
$$\forall_{f\in \Ffamily}\ \supp\big(f|_K\star \phi\big) \subset K\cdot\supp(\phi).$$

\noindent
This concludes the proof.
\end{proof}

At last, we have reached the climax of the paper. Let us recap how this main result relates to other compactness theorems known in the mathematical literature. To begin with, Theorem \ref{KolmogorovRieszWeilSudakovtheorem} generalizes the Kolmogorov-Riesz theorem\footnote{Apart from the papers \cite{HancheOlsenHolden}, \cite{Kolmogorov}, \cite{Riesz}, \cite{Tamarkin} mentioned in the Introduction, the proof of the Kolmogorov-Riesz theorem for $\reals^N$ can be found in \cite{Brezis}, p. 111 (Theorem 4.26) or in \cite{Precup}, p. 21 (Theorem 1.3).}, as $\reals^N$ is a locally compact Hausdorff group. Furthermore, we go to great lengths to render the proof below more transparent than the complicated Weil's argument\footnote{See \cite{Weil}, p. 53-54 for comparison.}. Last but not least, the final part of our result generalizes the Sudakov theorem\footnote{See \cite{HancheOlsenHoldenMalinnikova} or \cite{Sudakov}.}.

\begin{thm}
A family $\Ffamily \subset L^p(G)$ is relatively compact if and only if 
\begin{itemize}
	\item $\Ffamily$ is $L^p-$bounded,
	\item $\Ffamily$ is $L^p-$equicontinuous,
	\item $\Ffamily$ is $L^p-$equivanishing.
\end{itemize}

Furthermore, suppose that the group $G$ is such that for every open neighoburhood $U$ of the neutral element, there exists an element $x\in U$ such that $(x^n)_{n\in\naturals}$ is not contained in any compact set. Then the condition of $L^p-$boundedness is redundant. 
\label{KolmogorovRieszWeilSudakovtheorem}
\end{thm}
\begin{proof}
For the entire proof, which we divide into five steps, we fix $\eps>0.$

\vspace{0.3cm}
\noindent
\textbf{Step 1.} \textit{Relative compactness of $\Ffamily$ implies $L^p-$boundedness}

\vspace{0.3cm}
This step is immediate, as a compact set is always bounded in a metric space. 

\vspace{0.3cm}
\noindent
\textbf{Step 2.} \textit{Relative compactness of $\Ffamily$ implies $L^p-$equicontinuity}

\vspace{0.3cm}
Let $(f_n)_{n=1}^N$ be an $\frac{\eps}{3}-$net for the family $\Ffamily$. By Proposition 2.41 in \cite{FollandAHA}, p. 53 for every $n=1,\ldots,N$ there exists an open neighbourhood $U_n$ of the neutral element such that 
\begin{gather}
\sup_{x\in U_n}\ \|L_xf_n-f_n\|_p \leqslant \frac{\eps}{3} \hspace{0.4cm}\text{and}\hspace{0.4cm} \sup_{x\in U_n}\ \|R_xf_n-f_n\|_p \leqslant \frac{\eps}{3}.
\label{xinUnLxfnfn}
\end{gather}

\noindent
Put $U := \bigcap_{n=1}^N\ U_n$, which is obviously an open set. Consequently, for every $f\in\Ffamily$ there exists $n=1,\ldots,N$ such that 
\begin{gather*}
\sup_{x\in U}\ \|L_xf-f\|_p \leqslant \sup_{x\in U}\ \|L_xf-L_xf_n\|_p + \sup_{x\in U}\ \|L_xf_n-f_n\|_p + \|f_n-f\|_p \\
= 2 \|f_n-f\|_p + \sup_{x\in U}\ \|L_xf_n-f_n\|_p \stackrel{(\ref{xinUnLxfnfn})}{\leqslant} \eps.
\end{gather*}

\noindent
An analogous reasoning works for $\|R_xf-f\|_p$. We conclude that the family $\Ffamily$ is $L^p-$equicontinuous. 

\vspace{0.3cm}
\noindent
\textbf{Step 3.} \textit{Relative compactness of $\Ffamily$ implies $L^p-$equivanishing}

\vspace{0.3cm}
Let $(f_n)_{n=1}^N$ be an $\frac{\eps}{2}-$net for the family $\Ffamily$. For every $n=1,\ldots,N$ there exists $K_n\Subset G$ such that 
\begin{gather}
\int_{G\backslash K_n}\ |f_n|^p\ d\mu \leqslant \frac{\eps}{2}.
\label{GbackKnfp}
\end{gather}

\noindent
Put $K := \bigcup_{n=1}^N\ K_n$, which is obviously a compact set. Consequently, for every $f\in\Ffamily$ there exists $n=1,\ldots,N$ such that 
$$\int_{G\backslash K}\ |f|^p\ d\mu \leqslant \int_{G\backslash K}\ |f-f_n|^p\ d\mu + \int_{G\backslash K}\ |f_n|^p\ d\mu \stackrel{(\ref{GbackKnfp})}{\leqslant} \frac{\eps}{2} + \frac{\eps}{2} = \eps.$$

\noindent
This proves that $\Ffamily$ is $L^p-$equivanishing. 

\vspace{0.3cm}
\noindent
\textbf{Step 4.} \textit{$L^p-$boundedness, $L^p-$equicontinuity and $L^p-$equivanishing imply relative compactness of $\Ffamily$}

\vspace{0.3cm}
Due to Theorem \ref{choiceofphi} we pick $\phi\in C_c(G)$ such that 
\begin{gather}
\forall_{f\in \Ffamily}\ \|f\star \phi - f\|_p \leqslant \frac{\eps}{4}.
\label{fstarminusfepsilon3}
\end{gather}

\noindent
By $L^p-$equivanishing of $\Ffamily,$ there exists $K\Subset G$ such that 
\begin{gather}
\forall_{f\in \Ffamily}\ \|f-f|_K\|_p \leqslant \frac{\eps}{4\cdot \|\Delta^{-\frac{1}{p'}}\phi\|_1}.
\label{finalKchoice}
\end{gather}
 
\noindent
Due to Theorem \ref{approximatingfamily} we know that $\Ffamily|_K\star\phi$ is relatively compact in $C_0(G)$ and that 
$$\forall_{f\in\Ffamily}\ \supp(f|_K\star\phi) \subset D,$$

\noindent
where $D:=K\cdot\supp(\phi).$ By relative compactness of $\Ffamily|_K\star\phi$ there exists a finite sequence of functions $(g_n)_{n=1}^N\subset C_c(G)$ such that for every $f\in\Ffamily$ there exists $n=1,\ldots,N$ such that 
\begin{gather}
\big\|f|_K\star\phi - g_n\big\|_{\infty} \leqslant \frac{\eps}{4\cdot \mu(D)^{\frac{1}{p}}}.
\label{fstarfnstarepsilon3}
\end{gather}

\noindent
Due to inner regularity of $\mu$ there exists an open set $V\supset D$ such that 
\begin{gather}
\mu(V\backslash D) \leqslant \left(\frac{\eps}{4\cdot\max_{n=1,\ldots,N}\ \|g_n\|_{\infty}}\right)^p.
\label{usinginnerregularityofmu}
\end{gather}

\noindent
Since every locally compact group is normal\footnote{See Theorem 8.13 in \cite{HewittRoss}, p. 76.}, by Urysohn's lemma\footnote{See Theorem 1.5.11 in \cite{Engelking}, p. 41 or Lemma 4 in \cite{Kelley}, p. 115 or Theorem 33.1 in \cite{Munkres}, p. 207 or Theorem 1.5.6 in \cite{Pedersen}, p. 24.} there exists a function $u:G\longrightarrow [0,1]$ such that 
\begin{gather}
u|_D = 1 \hspace{0.4cm}\text{and}\hspace{0.4cm} u|_{G\backslash V} = 0.
\label{Urysohnfunction}
\end{gather}

\noindent
Finally, for every $f\in\Ffamily$ there exists $n=1,\ldots,N$ such that 
\begin{gather*}
\|f-h_n\cdot u\|_p \leqslant \|f-f\star\phi\|_p + \big\|f\star\phi - f|_K\star\phi\big\|_p + \big\|f|_K\star\phi-g_n\cdot u\big\|_p \\
\stackrel{(\ref{younginequality}),\ (\ref{fstarminusfepsilon3})}{\leqslant} \frac{\eps}{4} + \big\|f-f|_K\big\|_p\cdot \|\Delta^{-\frac{1}{p'}}\phi\|_1 + \bigg(\int_V\ \big|f|_K\star\phi - g_n\cdot u\big|^p\ d\mu\bigg)^{\frac{1}{p}}\\
\stackrel{(\ref{finalKchoice}),\ (\ref{Urysohnfunction})}{\leqslant} \frac{\eps}{2} + \bigg(\int_D\ \big|f|_K\star\phi - g_n\big|^p\ d\mu\bigg)^{\frac{1}{p}} + \bigg(\int_{V\backslash D}\ |g_n|^p\ d\mu\bigg)^{\frac{1}{p}}\\
\leqslant \frac{\eps}{2} + \|f|_K\star\phi - g_n\|_{\infty}\cdot \mu(D)^{\frac{1}{p}} + \|g_n\|_{\infty}\cdot \mu(V\backslash D)^{\frac{1}{p}}  \stackrel{(\ref{fstarfnstarepsilon3}),\ (\ref{usinginnerregularityofmu})}{\leqslant} \eps.
\end{gather*}

\noindent
This demonstrates that $(g_n\cdot u)_{n=1}^N$ is an $\eps-$net for $\Ffamily.$ Thus $\Ffamily$ is relatively compact.

\vspace{0.3cm}
\noindent
\textbf{Step 5.} \textit{Sudakov's part}

\vspace{0.3cm}
We will now prove that $L^p-$boundedness follows from $L^p-$equicontinuity and $L^p-$equivanishing under the assumption that for every open neighoburhood $U$ of the neutral element, there exists an element $x\in U$ such that $(x^n)_{n\in\naturals}$ is not contained in any compact set. By $L^p-$equicontinuity of $\Ffamily$ there exists an open neighbourhood $U_e$ of the neutral element such that 
\begin{gather}
\forall_{\substack{x\in U_e\\ f\in\Ffamily}}\ \left(\int_G\ |L_xf-f|^p\ d\mu\right)^{\frac{1}{p}} \leqslant 1.
\label{Lpequicontinuityand1}
\end{gather} 

\noindent
Furthermore, due to $L^p-$equivanishing there exists $K\Subset G$ such that 
\begin{gather}
\forall_{f\in\Ffamily}\ \left(\int_{G\backslash K}\ |f|^p\ d\mu\right)^{\frac{1}{p}} \leqslant 1.
\label{Lpequivanishingand1}
\end{gather}

\noindent
Let $x_*\in U_e$ be an element such that $(x_*^n)_{n\in\naturals}$ is not contained in any compact set. For every $f\in\Ffamily$ we have
\begin{gather*}
\left(\int_K\ |f|^p\ d\mu\right)^{\frac{1}{p}} \leqslant \left(\int_K\ |L_{x_*}f - f|^p\ d\mu\right)^{\frac{1}{p}} + \left(\int_K\ |L_{x_*}f|^p\ d\mu\right)^{\frac{1}{p}} \\
\stackrel{(\ref{Lpequicontinuityand1})}{\leqslant} 1 + \left(\int_G\ |f(x_*y)|^p\cdot \mathds{1}_K(y)\ dy\right)^{\frac{1}{p}} = 1 + \left(\int_{x_*K}\ |f|^p\ d\mu\right)^{\frac{1}{p}}.
\end{gather*}

\noindent
Using an inductive reasoning  we have
\begin{gather}
\forall_{n\in\naturals}\ \left(\int_K\ |f|^p\ d\mu\right)^{\frac{1}{p}} \leqslant n + \left(\int_{x_*^nK}\ |f|^p\ d\mu\right)^{\frac{1}{p}}.
\end{gather}

Observe that there exists $N\in\naturals$ such that $x_*^NK\cap K = \emptyset,$ since otherwise we would have $(x_n)_{n\in\naturals}\subset K\cdot K^{-1}$, contrary to our assumption. Finally, we have
$$\|f\|_p \leqslant \left(\int_K\ |f|^p\ d\mu\right)^{\frac{1}{p}} + \left(\int_{G\backslash K}\ |f|^p\ d\mu\right)^{\frac{1}{p}} \leqslant N + \left(\int_{x_*^NK}\ |f|^p\ d\mu\right)^{\frac{1}{p}} + 1 \stackrel{(\ref{Lpequivanishingand1})}{\leqslant} N+2,$$

\noindent
which ends the proof.
\end{proof}

\begin{cor}
A family $\Ffamily \subset L^p(\reals)$ is relatively compact if and only if 
\begin{itemize}
	\item $\Ffamily$ is $L^p-$equicontinuous,
	\item $\Ffamily$ is $L^p-$equivanishing.
\end{itemize}
\end{cor}
\begin{proof}
The corollary follows from Theorem \ref{KolmogorovRieszWeilSudakovtheorem} and the fact that if $U_0$ is an open neighbourhood of the neutral element (namely $0$) and if $x\in U_0,\ x\neq 0$ then the sequence $(n\cdot x)_{n\in\naturals}$ is not contained in any compact subset of $\reals.$
\end{proof}

As a final remark let us consider the space $\ell^p(\integers)$. Its relatively compact subsets are characterized as follows\footnote{See \cite{HancheOlsenHolden}.}:

\begin{thm}
A family $\Ffamily \subset \ell^p(\integers)$ is relatively compact if and only if
\begin{itemize}
	\item $\Ffamily$ is $\ell^p-$bounded,
	\item $\Ffamily$ is $\ell^p-$equivanishing, i.e. for every $\eps>0$ there exists $N\in\naturals$ such that
	$$\forall_{x\in\Ffamily}\ \sum_{|n| > N}\ |x_n|^p \leqslant \eps.$$
\end{itemize}
\label{discreteKR}
\end{thm}  

Firstly, let us observe that Theorem \ref{discreteKR} does not mention ``$\ell^p-$equicontinuity'' $-$ this is because in $\ell^p(\integers)$ every family is $\ell^p-$equicontinuous. Furthermore, we argue that $\ell^p-$boundedness is not a redundant condition in characterizing relatively compact families in $\ell^p(\integers)$. This does not contradict the final part of Theorem \ref{KolmogorovRieszWeilSudakovtheorem}, because the singleton $\{0\}$ is an open neighbourhood of the neutral element (namely $0$) and the sequence $(n\cdot 0)_{n\in\naturals}$ (the only possible sequence we can construct from the single element of $\{0\}$) is contained in the compact set $\{0\}$. 

Let $x \in \ell^p(\integers)$ be such that $x_0 = 1$ and $x_n = 0$ for $n\in\integers\backslash\{0\}.$ Consider the family $\Ffamily = (n\cdot x)_{n\in\naturals}$. Obviously, the family $\Ffamily$ is $\ell^p-$equivanishing, but it is not $\ell^p-$bounded. Furthermore, it is not relatively compact which proves that $\ell^p-$boundedness is indeed indispensible for the Theorem \ref{discreteKR} to work.

\section*{Acknowldegements}

I would like to express my gratitude towards Wojciech Kryszewski, whose insightful questions forced me to repeatedly rethink the ideas that I have been working on. It is a privilege to have such a great source of constructive criticism.

I also wish to thank Robert Sta\'nczy for the reference to the Arzel\`a-Ascoli theorem for $C_0(X)$ in the monograph ``Integral Equations and Stability of Feedback Systems'' by Constantin Corduneanu.

\end{document}